\newtheorem{theorem}{Theorem}[section]
\newtheorem*{theorem*}{Theorem}
\newtheorem{lemma}[theorem]{Lemma}
\newtheorem{proposition}[theorem]{Proposition}
\newtheorem*{proposition*}{Proposition}
\newtheorem{corollary}[theorem]{Corollary}
\theoremstyle{definition}
\newtheorem{definition}[theorem]{Definition}
\newtheorem{example}[theorem]{Example}
\newtheorem{examples}[theorem]{Examples}
\newtheorem{remark}[theorem]{Remark}
\newcommand{\NN}{\mathbb{N}}
\newcommand{\RR}{\mathbb{R}}
\newcommand{\CC}{\mathbb{C}}
\newcommand{\holalg}[1]{{\mathcal{O}\left( {#1} \right)}}
\newcommand{\id}{{\mathrm{id}}}
\newcommand{\aut}[1]{{\mathrm{Aut}\left( {#1} \right)}}
\newcommand{\autvol}[2]{{\mathrm{Aut_{ #2 }}\left( {#1} \right)}}
\newcommand{\autid}[1]{{\mathrm{Aut}^*\!\left( {#1} \right)}}
\newcommand{\autvolid}[2]{{\mathrm{Aut_{ #2 }^*}\left( {#1} \right)}}
\newcommand{\cont}{\mathcal{C}}
\title{Free Dense Subgroups of Holomorphic Automorphisms}
\author{Rafael B. Andrist}
\author{Erlend Forn\ae ss Wold}
\subjclass[2010]{Primary 32M17; Secondary 32E30, 47A16}
\keywords{density property, Andersen-Lempert theory, holomorphic automorphisms, free subgroups, hypercyclicity}
\begin{document}

\begin{abstract}
We show the existence of free dense subgroups, generated by 2 elements, in the holomorphic shear and overshear group of complex-Euklidean space and extend this result to the group of holomorphic automorphisms of Stein manifolds with Density Property, provided there exists a generalized translation. The conjugation operator associated to this generalized translation is hypercyclic on the topological space of holomorphic automorphisms.
\end{abstract}

\maketitle

\section{Introduction}

In Functional Analysis and Topological Dynamics, the phenomenon of so-called hypercyclicity of operators and universality of sequences of operators is studied. For a detailed survey, we refer to Grosse-Erdmann \cite{survey}. One of the early examples actually comes from Complex Analysis; in 1929 G.D. Birkhoff \cite{Birkhoff} constructed a holomorphic function $f : \CC \to \CC$ such that for a given sequence  $\{ a_m \}_{m = 1}^\infty \subset \RR$ with $\lim_{m \to \infty} a_m = \infty$ the set $\{z \mapsto f(z + a_m), \; m \in \NN \}$ is dense (in compact-open topology) in $\holalg{\CC}$. It follows in particular that for any translation $\tau : \CC \to \CC, \; \tau \not\equiv \id,$ the set $\{f \circ \tau^m : m \in \NN \}$ is dense in $\holalg{\CC}$. This motivated the following definition:
\begin{definition}
Let $E$ be a topological space.

A self-map $T : E \to E$ is called \emph{hypercyclic} if there exists an $f \in E$, called \emph{hypercyclic element} for $T$, such that the orbit $\{ T^m(f), \; m \in \NN \}$ is dense in $E$.

\end{definition}
The result of Birkhoff was later generalized to the case of holomorphic functions on $\CC^n$ and more recently by Zajac \cite{Zajac} also to holomorphic functions on pseudo-convex domains of $\CC^n$; he gives (in slightly more general form) the following Theorem which characterizes hypercyclic composition operators:
\begin{theorem}[Theorem 3.4. in \cite{Zajac}]
\label{thm:pseudoconv}
Let $\Omega \subset \CC^n$ by a pseudo-convex domain and let $\tau \in \holalg{\Omega, \Omega}$ be a holomorphic selfmap. By $C_\tau : \holalg{\Omega} \to \holalg{\Omega}$ we denote the composition operator $f \mapsto f \circ \tau$.
The operator $C_\tau$ is hypercyclic if and only if $\tau$ is injective and for every $\Omega$-convex compact subset $K \subset \Omega$ there exists $m \in \NN$ such that $\tau^m(K) \cap K = \emptyset$ and $K \cup \tau^m(K)$ is $\Omega$-convex.
\end{theorem}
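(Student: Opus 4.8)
The plan is to invoke Birkhoff's transitivity theorem: since $\holalg{\Omega}$ with the compact-open topology is a separable, completely metrizable topological vector space, $C_\tau$ is hypercyclic if and only if it is \emph{topologically transitive}, i.e.\ for every pair of nonempty open sets $U,V$ there is $m\in\NN$ with $C_\tau^m(U)\cap V\neq\emptyset$. Because $\Omega$ is pseudoconvex hence Stein, the $\Omega$-convex hull of a compact set is compact, so the sets $B(g,K,\epsilon)=\{f:\sup_K|f-g|<\epsilon\}$ with $K$ ranging over $\Omega$-convex compacta, $g\in\holalg{\Omega}$, and $\epsilon>0$ form a neighborhood basis. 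Transitivity thus reduces to the following: given $\Omega$-convex $K$, functions $g,h\in\holalg{\Omega}$, and $\epsilon>0$, find $m$ and $f$ with $\sup_K|f-g|<\epsilon$ and $\sup_K|f\circ\tau^m-h|<\epsilon$.

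For the \emph{sufficiency} (``if''), assume the two conditions and fix $K,g,h,\epsilon$; choose by hypothesis an $m$ with $\tau^m(K)\cap K=\emptyset$ and $K\cup\tau^m(K)$ $\Omega$-convex. An injective holomorphic self-map of a domain in $\CC^n$ is a biholomorphism onto an open subset, so $(\tau^m)^{-1}$ is holomorphic on a neighborhood of $\tau^m(K)$. Define $\phi$ on a neighborhood of the disjoint union by $\phi=g$ near $K$ and $\phi=h\circ(\tau^m)^{-1}$ near $\tau^m(K)$; then $\phi$ is holomorphic near $K\cup\tau^m(K)$, which is $\Omega$-convex, so by the Oka--Weil theorem there is $f\in\holalg{\Omega}$ with $\sup_{K\cup\tau^m(K)}|f-\phi|<\epsilon$. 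On $K$ this gives $\sup_K|f-g|<\epsilon$, and on $\tau^m(K)$ it gives $\sup_K|f\circ\tau^m-h|<\epsilon$. Hence $C_\tau$ is transitive and therefore hypercyclic.

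For the \emph{necessity} (``only if''), assume $C_\tau$ hypercyclic, hence transitive. If $\tau$ failed to be injective, say $\tau(p)=\tau(q)$ with $p\neq q$, then every $F=C_\tau^m f$ with $m\ge1$ satisfies $F(p)=F(q)$, so the orbit of any $f$ is contained in the proper closed subspace $\{F:F(p)=F(q)\}$ together with at most the single point $f$, which is never dense (coordinate functions separate $p$ and $q$); this contradiction gives injectivity. Disjointness for a given $\Omega$-convex $K$ then follows by applying transitivity to $U=B(0,K,\tfrac12)$ and $V=B(1,K,\tfrac12)$: the resulting $f$ satisfies $|f|<\tfrac12$ on $K$ and $|f|>\tfrac12$ on $\tau^m(K)$ for some $m\ge1$, forcing $\tau^m(K)\cap K=\emptyset$.

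The main obstacle is the necessity of the combined condition at a \emph{single} index $m$: transitivity only supplies, for each prescribed target pair $(g,h)$ and tolerance $\epsilon$, some index that may depend on the target, whereas the theorem demands one $m$ serving all targets, which is precisely $\Omega$-convexity of $K\cup\tau^m(K)$. I would argue the contrapositive: assuming no $m$ satisfies both properties, first upgrade the two-ball argument to produce infinitely many $m$ with $\tau^m(K)\cap K=\emptyset$, then for each such $m$ select a point $w_m\in\widehat{K\cup\tau^m(K)}\setminus(K\cup\tau^m(K))$ in the hull and build a single pair of open sets that blocks transitivity for all these $m$ simultaneously, exploiting the maximum-modulus inequality $|f(w_m)|\le\max(\sup_K|f|,\sup_{\tau^m(K)}|f|)$ together with injectivity to transport the prescribed data. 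Making this blocking uniform over infinitely many indices $m$ with their varying hull points $w_m$ is the delicate step and the crux of the necessity direction.
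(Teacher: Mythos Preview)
The paper does not prove this statement; it is quoted verbatim as Theorem~3.4 of Zajac \cite{Zajac} and used as a black box (only the sufficiency direction, and only for $\Omega=\CC^{n-1}$ with $\tau$ an ordinary translation, is ever invoked). So there is no ``paper's own proof'' to compare against.

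On your attempt itself: the sufficiency direction is correct and is the standard Birkhoff-transitivity plus Oka--Weil argument. For necessity, your injectivity and disjointness arguments are fine, but, as you yourself flag, the $\Omega$-convexity of $K\cup\tau^m(K)$ for some \emph{single} $m$ is only sketched. Your proposed contrapositive --- pick $w_m$ in the hull for each bad $m$ and manufacture one pair of open sets blocking all of them --- does not obviously close: the points $w_m$ vary with $m$, and the maximum-modulus bound $|f(w_m)|\le\max(\sup_K|f|,\sup_{\tau^m(K)}|f|)$ constrains $f$ on $\tau^m(K)$, which you do not control uniformly from a single neighborhood specified on $K$ alone. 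Zajac's actual proof of this direction is more involved than what you outline; if you want to supply a self-contained proof here you will need to either reproduce his argument or find a genuinely new one.
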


We now want to consider groups of holomorphic automorphisms and introduce the following notation:

\begin{definition} \hfill
\begin{enumerate}
\item Let $X$ be a complex manifold. By $\aut{X} := \{f : X \to X \; \mbox{biholomorphic}\}$ we denote the \emph{group of holomorphic automorphisms of $X$}, to be understood as topological group with the compact-open topology; and by $\autid{X}$ its identity component.
\item Let $X$ be a complex manifold of dimension $n \in \NN$. A holomorphic $n$-form $\omega$ on $X$ which is everywhere non-degenerate is called a \emph{holomorphic volume form}. By $\autvol{X}{\omega} := \{f : X \to X \; \mbox{biholomorphic}  \; :  \; f^\ast(\omega) = \omega \}$ we denote the \emph{group of volume-preserving holomorphic automorphisms of $X$}, and by $\autvolid{X}{\omega}$ its identity component.
\end{enumerate}
\end{definition}

These topological groups are metrizable with the metric of uniform convergence. However, these metric spaces will not be Cauchy-complete, as one can see for example in case of $\aut{\CC^n}$ and a sequence of automorphisms $z \mapsto M_k \cdot z$, where $M_k, \; k \in \NN,$ is a complex matrix with $\det M_k \neq 0$ but $\lim_{k \to \infty} \det M_k = 0$. The phenomenon of hypercyclicity is usually studied on separable Fr\'echet spaces or at least on complete metric spaces.

The results concerning hypercyclity of compositions operators on $\holalg{\CC^{n-1}}$ can be applied directly to the shear and overshear groups of $\CC^n$. The importance of these groups lies in the fact that they are dense subgroups of $\aut{\CC^n}$ resp. $\autvol{\CC^n}{\omega}, \; \omega = dz_1 \wedge \dots \wedge dz_n,$ as shown by Anders\'en \cite{Andersen} and Anders\'en--Lempert \cite{AndersenLempert}.

A holomorphic overshear $F : \CC^n \to \CC^n$ has, after possibly a special complex-linear change of coordinates, the form
\[
F(z_1, \dots, z_n) = (z_1, z_2, \dots, z_{n-1}, \exp(f(z_1, \dots, z_{n-1})) \cdot z_n + g(z_1, \dots, z_{n-1}))
\]
where $f, g \in \holalg{\CC^{n-1}}$. It is called a shear if $f \equiv 0$; in this case $\det(d F) = 1$ which is equivalent to $f^\ast(\omega) = \omega$. The (over-)shear group is the group generated by all the (over-)shears.

Using Theorem \ref{thm:pseudoconv} for $\Omega = \CC^{n-1} \subseteq \CC^{n-1}$ with $\tau$ being a translation, and applying this to every holomorphic direction, one obtains immediately that there are $n+1$ (over-)shears which generate a dense subgroup of the (over-)shear group. However, in Theorem \ref{thm:twoauteukl} we show that it is actually possible to generate dense subgroups by only $2$ automorphisms of a special form, one of them being a translation. Moreover we are able to show that this dense subgroup is generated freely by those $2$ automorphisms.

The conditions for $\tau$ which had been used already for Theorem \ref{thm:pseudoconv}, motivate the following definition which we use for the formulation of our main theorem.

\begin{definition}
Let $X$ be a complex manifold and $\tau \in \aut{X}$. We call $\tau$ a \emph{generalized translation} if for any holomorphically convex compact $K \subsetneq X$ there exists an $m \in \NN$ such that for the iterate $\tau^m$ it holds that 
\begin{enumerate}
\item $\tau^m(K) \cap K = \emptyset$ and
\item $\tau^m(K) \cup K$ is $\holalg{X}$-convex.
\end{enumerate}
\end{definition}
An obvious example for $X = \CC^n$ is the translation $z \mapsto z + a, \; a \neq 0$. 

\begin{theorem*}[\ref{thm:twoautdens}]
Let $X$ be a Stein manifold with density property and assume there exists a generalized translation $\tau \in \autid{X}$. Then there exists an $F \in \autid{X}$ such that $\left< \tau, F  \right>$ is a free and dense (in compact-open topology) subgroup of $\autid{X}$.
\end{theorem*}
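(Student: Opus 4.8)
The plan is to build $F$ by a careful interleaving of two requirements: density of $\langle \tau, F\rangle$ in $\autid{X}$, and freeness of this two-generator group. I would enumerate a countable dense subset $\{g_1, g_2, \dots\}$ of $\autid{X}$ (using metrizability and separability of $\autid{X}$ with the compact-open topology) together with a shrinking tolerance sequence $\varepsilon_k \to 0$, and construct $F$ as a limit of a Cauchy sequence of automorphisms via Andersen--Lempert theory, using the density property of $X$ at each stage to approximate prescribed local isotopies by global automorphisms. At the $k$-th stage the goal is to arrange that some specific reduced word $w_k(\tau, F)$ in $\tau$ and $F$ lies within $\varepsilon_k$ of $g_k$ on a large compact set; enumerating the density targets so that every element of $\{g_k\}$ is hit infinitely often then forces the orbit of the group to be dense.

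The engine for the approximation step is the generalized-translation hypothesis combined with the density property. Because $\tau$ is a generalized translation, for any holomorphically convex compact $K$ I can choose $m$ so that $K$ and $\tau^m(K)$ are disjoint and their union is $\holalg{X}$-convex; this is precisely the geometric configuration that lets me independently prescribe the behavior of $F$ near $K$ versus near $\tau^m(K)$ without the constraints interfering. Concretely, I would design $F$ to act like a prescribed map on one of these pieces while being controlled (close to identity, or close to a correction term) on the other, so that composing with powers of $\tau$ shuttles data between disjoint convex regions. The density property, through the Andersen--Lempert theorem, guarantees that such a piecewise-prescribed map on an $\holalg{X}$-convex compact set can be approximated uniformly by a genuine global automorphism in $\autid{X}$, which is what keeps each $F$ in the correct component.

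Freeness I would secure by a separate and robust bookkeeping argument that is interleaved with the density construction. The key observation is that freeness of $\langle \tau, F\rangle$ is a generic condition: for each nontrivial reduced word $w$ in two letters, the set of $F$ for which $w(\tau, F) \neq \id$ is open and dense in $\autid{X}$. Enumerating all reduced words $w$ and all density targets $g_k$, I would at alternate stages of the inductive construction ensure both that the running word-approximation pushes toward density and that for the word being treated we have arranged $w(\tau, F)$ to differ from the identity at some point, with a definite margin that subsequent (sufficiently small) perturbations cannot destroy. Freeness then follows since no nontrivial reduced word can be the identity.

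The main obstacle I anticipate is the simultaneous satisfaction of both demands in the limit: the density targets require increasingly large and increasingly accurate approximations, while freeness requires that a fixed nontrivial margin for each word survive all later perturbations. The delicate point is to order the requirements and choose the $\varepsilon_k$ so that the tail of the construction perturbs $F$ by less than the margins already established for every word treated so far, yet still allows the later stages enough freedom to achieve new density approximations on larger compacts. Making the generalized-translation mechanism yield an explicit word $w_k(\tau, F)$ whose value on a large compact set is controllably close to $g_k$ — essentially encoding an arbitrary automorphism as a controlled product of $\tau$-shuttles and $F$-actions — is where the real work lies, and where the hypothesis $\tau \in \autid{X}$ (rather than merely $\aut{X}$) is presumably needed to keep the whole word inside the identity component.
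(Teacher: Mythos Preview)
Your plan is essentially the paper's proof: an inductive construction of $F = \lim F_j$ via the Anders\'en--Lempert theorem, using the generalized translation to obtain disjoint $\holalg{X}$-convex regions on which $F_j$ is prescribed independently, with freeness handled by ensuring at stage $j$ that no reduced word of length $\leq j$ is the identity and then shrinking all later tolerances below the resulting margin. The step you flag as ``where the real work lies'' turns out to be simple in the paper --- the words are just conjugates $w_j = \tau^{-m_j} F \tau^{m_j}$, so at stage $j$ one prescribes $F_j \approx F_{j-1}$ on a large compact $L_{k(j)}$ and $F_j \approx \tau^{m_j} g_j \tau^{-m_j}$ on $\tau^{m_j}(L_{k(j)})$, which is exactly your two-piece mechanism and makes the conjugation orbit $\{\tau^{-m_j} F \tau^{m_j}\}_j$ dense.
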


From the point of view of hypercyclicity we can also state
\begin{theorem*}[\ref{thm:densehyperc}]
Let $X$ be a Stein manifold with density property and assume there exists a generalized translation $\tau \in \autid{X}$. Then the associated conjugation operator $\widetilde{C}_\tau$, defined by $\widetilde{C}_\tau(f) = \tau \circ f \circ \tau^{-1}$ is hypercyclic on $\autid{X}$.
\end{theorem*}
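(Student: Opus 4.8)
The plan is to exhibit an explicit hypercyclic element, rather than to invoke the Birkhoff transitivity theorem: the latter requires a complete (or at least Baire) underlying space, and the remark preceding the theorem warns that $\autid{X}$ need not be Cauchy-complete in the metric of uniform convergence. I would take as hypercyclic element the very automorphism $F$ produced in Theorem~\ref{thm:twoautdens}, constructed so that its conjugates $\widetilde{C}_\tau^{\,m}(F)=\tau^m\circ F\circ\tau^{-m}$ already sweep out a dense subset of $\autid{X}$; density of $\langle\tau,F\rangle$ is then a formal consequence, and hypercyclicity of $\widetilde{C}_\tau$ is exactly the statement that this orbit is dense. Thus the two theorems share a single construction, which I now describe.

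First I would fix a countable dense subset $\{g_k\}_{k\in\NN}\subset\autid{X}$ (such a set exists because $\autid{X}$ is separable metrizable), an exhaustion $K_1\subset K_2\subset\cdots$ of $X$ by $\holalg{X}$-convex compacts, and a sequence $\varepsilon_k\searrow 0$. A preliminary observation is that $\tau^{-1}$ is again a generalized translation: since an automorphism preserves $\holalg{X}$-convexity (the hull is intrinsic to $\holalg{X}$), applying $\tau^{-m}$ to the two defining conditions for $\tau$ turns them into the corresponding conditions for $\tau^{-1}$ with the same exponent. This lets me push regions off to infinity in the direction dictated by the requirement $\widetilde{C}_\tau^{\,m}(f)\approx g_k$ on $K_k$, which unwinds to $f\approx\tau^{-m}\circ g_k\circ\tau^{m}$ on the far-away region $\tau^{-m}(K_k)$.

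Then I would build $F$ inductively as the limit of a sequence $F_N\in\autid{X}$. Having defined $F_{N-1}$ and an $\holalg{X}$-convex compact $L_{N-1}\supset K_{N-1}$ on which $F_{N-1}$ is already controlled, I use the generalized translation property for a large compact containing $L_{N-1}$ to select an exponent $m_N$ so large that the source region $R_N:=\tau^{-m_N}(K_N)$ is disjoint from $L_{N-1}$ with $L_{N-1}\cup R_N$ holomorphically convex, and simultaneously so that the image region $\tau^{-m_N}\!\big(g_N(K_N)\big)$ is disjoint from $F_{N-1}(L_{N-1})$ with holomorphically convex union. I then apply the Anders\'en--Lempert approximation theorem (valid by the density property) to the injective holomorphic map that equals $F_{N-1}$ near $L_{N-1}$ and equals $\tau^{-m_N}\circ g_N\circ\tau^{m_N}$ near $R_N$; since both pieces lie in $\autid{X}$ and the source and image unions are disjoint and $\holalg{X}$-convex, this map is isotopic to the identity through such maps and can be approximated by an $F_N\in\autid{X}$ that is $\varepsilon_N$-close to $F_{N-1}$ on $L_{N-1}$ and $\varepsilon_N$-close to the prescribed conjugate on $R_N$. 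Setting $L_N\supset L_{N-1}\cup R_N\cup K_N$ closes the induction.

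The hard part will be the passage to the limit. Choosing $\sum_N\varepsilon_N<\infty$ and arranging the $L_N$ to exhaust $X$ forces $F_N$ to converge uniformly on compacts to a holomorphic $F$; the Anders\'en--Lempert convergence criterion (controlling the maps on the fixed increasing exhaustion while keeping the tails small) is what guarantees that $F$ is a genuine automorphism in $\autid{X}$ rather than merely an injective self-map, thereby circumventing the failure of completeness. Because each modification beyond stage $N$ is $\varepsilon$-small on $R_N$, the limit still satisfies $F\approx\tau^{-m_N}\circ g_N\circ\tau^{m_N}$ on $R_N$, that is $\widetilde{C}_\tau^{\,m_N}(F)\approx g_N$ on $K_N$. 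As $\{g_N\}$ is dense and $K_N$ exhausts $X$ with $\varepsilon_N\to 0$, the orbit $\{\widetilde{C}_\tau^{\,m}(F)\}_{m\in\NN}$ is dense in $\autid{X}$, so $F$ is hypercyclic for $\widetilde{C}_\tau$. The principal technical burden throughout is the geometric bookkeeping that keeps all source and image regions pairwise disjoint with $\holalg{X}$-convex unions, so that Anders\'en--Lempert can be invoked at every step and the approximations survive in the limit.
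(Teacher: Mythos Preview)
Your proposal is correct and follows essentially the same construction as the paper: the hypercyclic element is the automorphism $F$ built in the proof of Theorem~\ref{thm:twoautdens} by an inductive Anders\'en--Lempert approximation on an exhaustion, with the generalized translation pushing the prescribed data off to infinity so that $\widetilde{C}_\tau^{\,m_N}(F)\approx g_N$ on $K_N$. You are in fact slightly more careful than the paper in noting that $\tau^{-1}$ is again a generalized translation (so that the orbit direction matches $\widetilde{C}_\tau$ rather than $\widetilde{C}_{\tau^{-1}}$), whereas the paper is more explicit than your sketch about two points you gloss over: controlling the inverses $F_j^{-1}$ alongside $F_j$ so that the limit lands in $\autid{X}$, and enclosing the full $\cont^1$-isotopy orbits in a single holomorphically convex compact before invoking the generalized translation, which is what guarantees the Runge condition along the entire path in the Anders\'en--Lempert theorem.
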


The notion of Density Property and so-called Anders\'en--Lempert Theory is introduced in Section \ref{sec:densprop}. Manifolds with density property in particular include $\CC^n, n \geq 2$, certain homogeneous spaces and all linear algebraic groups except those with connected components $\CC$ or $(\CC^\ast)^n$. The density property of Stein manifolds implies that the automorphism group is large, in particular infinite dimensional and acts $m$-transitively for any $m \in \NN$ (Varolin \cite{Varolin2}).

\section{Complex Euklidean Space}

Shears as a special kind of holomorphic automorphisms of $\CC^n$ have been introduced by Rosay and Rudin \cite{RosayRudin}. For a more recent treatment we refer to the textbook of Forsterni\v{c} \cite{steinmanifolds}.

\begin{definition}
A map
\[
F(z_1, \dots, z_n) = (z_1, z_2, \dots, z_{n-1}, \exp(f(z_1, \dots, z_{n-1})) \cdot z_n + g(z_1, \dots, z_{n-1}))
\]
where $f, g \in \holalg{\CC^{n-1}}$,
is called an \emph{overshear in direction of the $n$-th coordinate axis} (or a \emph{shear in direction of the $n$-th coordinate axis} if $f \equiv 0$). For $A \in \mathrm{SL}(\CC^n)$ the conjugates $A^{-1} \circ F \circ A$ are called \emph{overshears} (or \emph{shears} if $f \equiv 0$). The group generated by the shears resp. overshears is called the \emph{shear group} resp. \emph{overshear group}.
\end{definition}

\begin{remark}
The group generated by (over-)shears in all coordinate directions coincides with teh (over)-shear group.
\end{remark}

For our purposes it will be convenient to introduce the following notion:

\begin{definition}
Set $I : \CC^n \to \CC^n, \; n \geq 2,$ to be 
\[
I(z_1, \dots, z_n) := (z_2, \dots, z_n, -(-1)^n z_1)
\]
A map $F : \CC^n \to \CC^n$ which can be written as $F = G \circ I$, where $F$ is an (over-)shear is called a \emph{twisted (over-)shear}.
\end{definition}

\begin{lemma}
The groups generated by (over)-shears and twisted (over-)shears coincide.
\end{lemma}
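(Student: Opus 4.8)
The plan is to denote by $S$ the group generated by all (over-)shears and by $T$ the group generated by all twisted (over-)shears, and to prove $S = T$ by showing that the linear map $I$ belongs to \emph{both} groups; once this is established, both inclusions follow formally from the group axioms. The entire argument runs identically in the shear case and the overshear case, since every shear is in particular an overshear, so the shear group sits inside the overshear group.

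First I would observe that $I \in T$ for free: the identity map is a shear (take $f \equiv 0$ and $g \equiv 0$ in the defining formula), and $I = \id \circ I$ is therefore a twisted shear by definition, hence an element of $T$.

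The main step is to show $I \in S$. Here I would first compute the determinant of $I$ viewed as a complex-linear map. Since $I$ sends $e_j \mapsto e_{j-1}$ for $j = 2, \dots, n$ and $e_1 \mapsto (-1)^{n+1} e_n$, it is a monomial matrix whose underlying permutation is an $n$-cycle of sign $(-1)^{n-1}$, so that $\det I = (-1)^{n-1}(-1)^{n+1} = 1$ and thus $I \in \mathrm{SL}(\CC^n)$. I would then invoke the classical fact that $\mathrm{SL}(\CC^n)$ is generated by elementary transvections $z \mapsto z + \lambda z_j e_i$ with $i \neq j$, each of which is a linear shear (namely a shear with $f \equiv 0$ and linear $g$, in the appropriate coordinate direction, which lies in $S$ by the preceding remark on shears in all coordinate directions). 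Consequently every element of $\mathrm{SL}(\CC^n)$, and in particular $I$, lies in $S$.

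Finally I would assemble the two inclusions. For $T \subseteq S$: an arbitrary generator of $T$ has the form $G \circ I$ with $G$ an (over-)shear, and both $G$ and $I$ lie in $S$, so $G \circ I \in S$. For $S \subseteq T$: an arbitrary generating (over-)shear $G$ can be written $G = (G \circ I) \circ I^{-1}$, where $G \circ I \in T$ is a twisted (over-)shear and $I^{-1} \in T$ because $T$ is a group containing $I$; hence $G \in T$. Combining the two inclusions gives $S = T$. The only non-formal ingredient is the identification $I \in \mathrm{SL}(\CC^n) \subseteq S$, which is where I expect the substance of the proof to lie; everything else is bookkeeping with the group structure.
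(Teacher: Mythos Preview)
Your proof is correct. The overall strategy---show that $I$ lies in both groups and then deduce the two inclusions formally---is exactly the paper's. The only difference is in how you establish $I \in S$: the paper writes $I$ as a product of signed coordinate transpositions and exhibits each such transposition explicitly as a product of three linear shears, namely $t = A^{-1}\circ B\circ A$ with $A(z)=(z_1,\dots,z_{n-1},z_n+z_{n-1})$ and $B(z)=(z_1,\dots,z_{n-1}-z_n,z_n)$. You instead observe $\det I = 1$ and invoke the standard fact that $\mathrm{SL}(\CC^n)$ is generated by elementary transvections, each of which is a linear shear. Your route is cleaner and more conceptual (it immediately gives $\mathrm{SL}(\CC^n)\subseteq S$, which is stronger than needed), while the paper's is fully self-contained and hands you the concrete factorization; of course the paper's three-shear identity is precisely the usual proof of the transvection-generation statement in disguise, so the two arguments are very close in spirit.
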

\begin{proof}
By definition, $I$ is a twisted shear, hence every (over-)shear $G$ can be written as $G = F \circ I^{-1}$ where $F = G \circ I$ is a twisted (over-)shear. It remains to show that $I$ is a composition of shears: it is sufficent to show that any transposition $t$ of coordinates with a sign change can be written as a composition of shears, for simplicity $t(z_1, \cdots, z_{n-1}, z_n) = (z_1, \cdots, z_n, -z_{n-1})$. Let
\begin{align*}
A(z_1, \dots, z_{n-1}, z_n) &:= (z_1, \dots, z_{n-1}, z_n + z_{n-1}) \\
B(z_1, \dots, z_{n-1}, z_n) &:= (z_1, \dots, z_{n-1} - z_n, z_n)
\end{align*}
Then $t = A^{-1} \circ B \circ A$.
\end{proof}

For the following application of hypercyclicity we will need a slightly different statement than in Theorem \ref{thm:pseudoconv}. Despite that the method of proof is essentially the same and follows the original idea of G.D. Birkhoff \cite{Birkhoff}, we include the proof for a lack of reference.

\begin{proposition}
\label{prop:doublehypercyc}
Let $X$ be a Stein manifold and let $\tau : X \to X$ be a generalized translation. Then there exists a pair of holomorphic functions $f, g \in \holalg{X}$ such that for any pair of holomorphic functions $\hat{f}, \hat{g} \in \holalg{X}$ there is a subsequence $\{m_\ell\}_{\ell \in \NN} \subset \NN$ such that
\begin{align*}
 f \circ \tau^{m_{2\ell-1}} &\to \hat{f}, \\
 g \circ \tau^{m_{2\ell-1}} &\to 0, \\
 f \circ \tau^{m_{2\ell}}   &\to 0 \; \mbox{and} \\
 g \circ \tau^{m_{2\ell}}   &\to \hat{g}
\end{align*}
in compact-open topology.
\end{proposition}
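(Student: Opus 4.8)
The plan is to adapt the classical Birkhoff construction, building $f$ and $g$ simultaneously by an inductive Oka--Weil approximation scheme in which the defining property of the generalized translation $\tau$ supplies, at every stage, a large power $\tau^{m}$ whose corresponding translate of the accumulated data is both disjoint from that data and keeps the union $\holalg{X}$-convex. First I would fix a normal exhaustion $K_1\subset K_2\subset\cdots$ of $X$ by $\holalg{X}$-convex compacta with $\bigcup_j K_j=X$, together with a countable dense subset $\{\phi_k\}_{k\in\NN}\subset\holalg{X}$ (available since $X$ is Stein, so $\holalg{X}$ is a separable Fr\'echet space). I then organize the construction around a list of \emph{tasks}, alternating strictly between two types: an \emph{odd} task asks to approximate the pair $(\phi_k,0)$ and an \emph{even} task the pair $(0,\phi_k)$, each on some $K_j$ and to some tolerance, with every combination of slot, index $k$, compactum $K_j$, and tolerance scheduled infinitely often.

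\emph{Key inductive step.} Suppose that after $n$ steps I have global functions $f_n,g_n\in\holalg{X}$ that are suitably close to the prescribed targets on an $\holalg{X}$-convex compactum $L_n$. For the next task, on $K_j$ with target $\phi$ in, say, the first slot, let $\widetilde K$ be the $\holalg{X}$-convex hull of $L_n\cup K_j$. Applying the generalized-translation property to $\widetilde K$ yields $m=m_{n+1}$ with $\tau^m(\widetilde K)\cap\widetilde K=\emptyset$ and $L_{n+1}:=\widetilde K\cup\tau^m(\widetilde K)$ again $\holalg{X}$-convex. On the disjoint union $L_{n+1}$ I define holomorphic data by keeping $f_n,g_n$ on $\widetilde K$ and setting $f=\phi\circ\tau^{-m}$, $g=0$ on $\tau^m(\widetilde K)$; since the two pieces are disjoint and $L_{n+1}$ is convex, Oka--Weil furnishes global $f_{n+1},g_{n+1}\in\holalg{X}$ approximating this data on $L_{n+1}$ to within a preassigned $\delta_{n+1}$. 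On $K_j$ this gives $f_{n+1}\circ\tau^{m}\approx\phi$ and $g_{n+1}\circ\tau^{m}\approx 0$. Even tasks are handled symmetrically, with the roles of $f$ and $g$ exchanged.

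\emph{Passing to the limit and extracting the subsequence.} Choosing the tolerances $\delta_n$ summable and small relative to the control margins already achieved on $L_n$ guarantees that $f_n,g_n$ are Cauchy on each $K_j$, since every later $\widetilde K$ contains $L_n\supseteq K_j$ and so later modifications are uniformly small there; hence $f_n,g_n$ converge in the compact-open topology to $f,g\in\holalg{X}$, and each placed approximation survives in the limit up to a small controlled total error. Because the enlarging compacta $\widetilde K_n\nearrow X$ cannot remain disjoint from a fixed bounded power of $\tau$, the exponents satisfy $m_n\to\infty$, so each scheduled approximation recurs along arbitrarily large exponents. Given arbitrary $\hat f,\hat g$, I then pick $\phi_k\to\hat f$ from the odd tasks and $\phi_{k'}\to\hat g$ from the even tasks and read off a single increasing sequence $m_1<m_2<\cdots$, whose odd-indexed terms realize $f\circ\tau^{m_{2\ell-1}}\to\hat f$, $g\circ\tau^{m_{2\ell-1}}\to 0$ and whose even-indexed terms realize $f\circ\tau^{m_{2\ell}}\to 0$, $g\circ\tau^{m_{2\ell}}\to\hat g$, as required.

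\emph{Main obstacle.} The heart of the matter is the \emph{simultaneous} control of two functions driving the orbit toward targets in two different slots: for a merely hypercyclic composition operator the diagonal $\widetilde{C}_\tau\oplus\widetilde{C}_\tau$-type behaviour may fail, so such interlacing is not automatic. What rescues the argument is that the generalized-translation hypothesis is uniform over \emph{all} $\holalg{X}$-convex compacta; at every stage it returns a translate disjoint from the accumulated data that preserves holomorphic convexity of the union, which is exactly the hypothesis Oka--Weil needs in order to prescribe $f$ and $g$ independently on the old and the new region. Verifying that the summable errors genuinely preserve all earlier approximations in the limit is routine bookkeeping, but the disjoint-plus-convex geometry furnished by $\tau$ is the indispensable ingredient.
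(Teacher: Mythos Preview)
Your proposal is correct and follows essentially the same Birkhoff-type strategy as the paper: fix a countable dense family in $\holalg{X}$, alternate between ``odd'' targets $(\phi,0)$ and ``even'' targets $(0,\phi)$, and at each stage use the generalized-translation hypothesis to obtain a translate of the accumulated compact that is disjoint and keeps the union $\holalg{X}$-convex, then apply Runge/Oka--Weil. The only cosmetic difference is that the paper writes $f=\sum_j f_j$ and $g=\sum_j g_j$ as telescoping series of small corrections, whereas you phrase the construction as a Cauchy sequence $f_n\to f$, $g_n\to g$ of successive approximants; these are equivalent implementations of the same induction.
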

\begin{proof}
Let $\{h_j\}_{j \in \NN} \subset \holalg{X}$ be a sequence such that $\{h_{2j}\}_{j \in \NN}$ and $\{h_{2j-1}\}_{j \in \NN}$ are both dense (in compact-open topology) in $\holalg{X}$. Because $X$ is Stein, there exists an exhaustion of $X$ with $\holalg{X}$-convex compacts $\{K_j\}_{j \in \NN}$. By definition of the generalized translation $\tau$, there exists an $m_j \in \NN$ such that $K_j \cap \tau^{m_j}(K_j) = \emptyset$ and $K_j \cup \tau^{m_j}(K_j)$ is $\holalg{X}$-convex. By omitting certain compacts of this exhaustion we may further assume that $K_j \cup \tau^{m_j}(K_j) \subset K_{j+1}$ after renumbering. Therefore there exist by Runge approximation two sequences $f_j, g_j \in \holalg{X}, j \in \NN,$ such that
\begin{align*}
 \sup_{\tau^{m_{2j-1}} (K_{2j-1})} d(f_{2j-1}, h_{2j-1} \circ \tau^{-m_{2j-1}}) < \frac{1}{2^{j}}, \quad &
 \sup_{K_{2j-1}} d(f_{2j-1}, 0) &< \frac{1}{2^{j}} \; \forall \, j \in \NN
 \\
 \sup_{\tau^{m_{2j-1}} (K_{2j-1})} d(f_{2j-1}, 0) &< \frac{1}{2^{2 \ell_j}}, \; \forall\, j < \ell_j
 \\ 
 \sup_{\tau^{m_{2j-1}} (K_{2j-1})} d(g_{2j-1}, 0) < \frac{1}{2^{j}}, \quad & 
 \sup_{K_{2j-1}} d(g_{2j-1}, 0) &< \frac{1}{2^{j}} \; \forall \, j \in \NN
 \\
 \sup_{\tau^{m_{2j-1}} (K_{2j-1})} d(g_{2j-1}, 0) &< \frac{1}{2^{2 \ell_j}}, \; \forall\, j < \ell_j
 \\ 
 \sup_{\tau^{m_{2j}} (K_{2j})} d(f_{2j}, 0) < \frac{1}{2^{j}}, \quad & 
 \sup_{K_{2j}} d(f_{2j}, 0) &< \frac{1}{2^{j}} \; \forall \, j \in \NN
 \\
 \sup_{\tau^{m_{2j}} (K_{2j})} d(f_{2j}, 0) &< \frac{1}{2^{2 \ell_j}}, \; \forall\, j < \ell_j
 \\ 
 \sup_{\tau^{m_{2j}} (K_{2j})} d(g_{2j}, h_{2j} \circ \tau^{-m_{2j}}) < \frac{1}{2^{j}}, \quad & 
 \sup_{K_{2j}} d(g_{2j}, 0) &< \frac{1}{2^{j}} \; \forall \, j \in \NN
 \\
 \sup_{\tau^{m_{2j}} (K_{2j})} d(g_{2j}, 0) &< \frac{1}{2^{2 \ell_j}}, \; \forall\, j < \ell_j
\end{align*}

Set $f := \sum_{j \in \NN} f_j$ and $g := \sum_{j \in \NN} g_j$. From the above estimates it is clear that the series $f$ and $g$ converge uniformly on compacts of $X$. Picking a subsequence of $\{h_j\}_{j \in \NN}$ such that $h_{2m_\ell-1} \to \hat{f}$ and $h_{2m_\ell} \to \hat{g}$ for $\ell \to \infty$ in compact-open topology, the following estimate holds uniformly on  $K_{j\ell}$: 
\begin{align*}
d(f \circ \tau^{m_{2 j_\ell - 1}}, \hat{f}) \leq 
&\sum_{j=1}^{2 j_\ell - 2} d(f_j \circ \tau^{ m_{2 j_\ell - 1}}, 0) \\
&+ d(f_{2 j_\ell - 1} \circ \tau^{m_{2 j_\ell - 1}}, h_{2 j_\ell - 1}) + d(h_{2 j_\ell - 1}, \hat{f}) \\
&+  \sum_{j=2 j_\ell}^{\infty} d(f \circ \tau^{m_{2 j_\ell} - 1}, 0) \\
\leq & 2^{-j\ell} + 2^{-j\ell} + d(h_{2 j_\ell - 1}, \hat{f}) + 2^{-j\ell} \to 0
\end{align*}
Analogous estimates hold for the other three cases.
\end{proof}

\begin{theorem} \hfill
\label{thm:twoauteukl}
\begin{enumerate}
\item For any translation $\tau \not\equiv 0$ of $\CC^n, n \geq 2,$ there exists a holomorphic automorphism $F$ of $\CC^n$ which is conjugate to a twisted shear map such that $\left< \tau, F  \right>$ is a free and dense (in compact-open topology) subgroup of the shear group of $\CC^n$.
\item For any translation $\tau \not\equiv 0$ of $\CC^n, n \geq 2,$ there exists a holomorphic automorphism $f$ of $\CC^n$ which is conjugate to a twisted overshear map such that $\left< \tau, F  \right>$ is a free and dense (in compact-open topology) subgroup of the overshear group of $\CC^n$.
\end{enumerate}
\end{theorem}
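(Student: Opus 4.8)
The plan is to reduce the statement to a density-plus-freeness assertion about a single twisted (over-)shear $F$ together with the given translation $\tau$, and to extract both properties from the joint hypercyclicity of Proposition \ref{prop:doublehypercyc}. First I would normalize: since any two nonzero translations of $\CC^n$ are conjugate by an element of $\mathrm{SL}(\CC^n)$, and such linear conjugation carries (over-)shears to (over-)shears and translations to translations, I may assume $\tau$ is a standard translation $z \mapsto z + a$ with $a \neq 0$. Writing $z = (z', z_n)$ with $z' \in \CC^{n-1}$, conjugation of a coordinate (over-)shear in direction $d$ by $\tau^m$ shifts its defining functions by the corresponding components of $ma$, i.e.\ it replaces $f(\cdot), g(\cdot)$ by $f(\cdot + m a_{\hat d}), g(\cdot + m a_{\hat d})$ and leaves the direction and the (over-)shear type unchanged; crucially no growing translation appears, because the additive change in the $d$-th coordinate cancels. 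This identifies the conjugation action of $\tau$ with the composition operator on $\holalg{\CC^{n-1}}$ governed by Proposition \ref{prop:doublehypercyc} (and since $a \neq 0$, at most one coordinate direction can give a trivial shift).

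Next I would set $F$ conjugate to a twisted (over-)shear $O \circ I$, where $O$ is an (over-)shear in the $n$-th direction whose defining functions are the hypercyclic pair $f, g$ furnished by Proposition \ref{prop:doublehypercyc}. The role of the linear twist $I$ is that it has finite order $N$ in $\mathrm{SL}(\CC^n)$, so $I^N = \id$ and $F^N$ is conjugate to $\prod_{k=0}^{N-1} I^{k} O\, I^{-k}$, a cascade of (over-)shears running through every coordinate direction with no net permutation. Conjugating this cascade by $\tau^m$ acts cleanly, slot by slot, as the shift operator of the previous paragraph. Here the two-sided control of Proposition \ref{prop:doublehypercyc} is decisive: along the odd subsequence the multiplier data converge to a prescribed $\hat f$ while the additive data die, and along the even subsequence the additive data converge to a prescribed $\hat g$ while the multiplier data die. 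This lets me realize, in the closure of $\langle \tau, F\rangle$, pure-multiplier and pure-additive (over-)shears in each coordinate direction with arbitrary defining functions, and a direction left degenerate by $a$ is reached by conjugating with the finite-order $I$. Since by the Remark these generate the whole (over-)shear group, the closure of $\langle \tau, F\rangle$ is the entire (over-)shear group, giving density.

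The freeness is the step I expect to be the main obstacle. I would prove it by a Baire-category argument in the Fr\'echet space of admissible defining pairs $(f,g)$. On one hand, by the standard Baire characterization of universality the pairs delivering the conclusion of Proposition \ref{prop:doublehypercyc}, hence the pairs yielding density, form a residual set. On the other hand, for each nontrivial reduced word $w$ in two letters the condition $w(\tau, F) = \id$ is closed, since it is tested by evaluation at points and depends continuously on $(f,g)$; I would show it is nowhere dense by exhibiting, for each $w$, a single choice of $(f,g)$ (for instance polynomial data) with $w(\tau, F) \neq \id$. As $w(\tau, F)$ depends analytically on $(f,g)$, one non-vanishing instance forces the relation to fail on a dense open set. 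Intersecting the residual density set with the countably many relation-free dense open sets yields a pair $(f,g)$ for which $\langle \tau, F\rangle$ is simultaneously dense and free.

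The technical heart, and the place where the argument can fail if one is careless, is this last non-degeneracy: producing for every nontrivial reduced word an explicit $(f,g)$ that breaks it. I would organize this by tracking how the twist $I$ interleaves the directions of the successive (over-)shears occurring in the word, so that a word evaluating to the identity would force an algebraic relation among the iterated shifts $f(\cdot + m a_{\hat d}), g(\cdot + m a_{\hat d})$ that a generic (even polynomial) choice cannot satisfy; the finite order of $I$ and the injectivity forced on $\tau$ by the generalized-translation hypotheses keep the bookkeeping finite. The overshear case (part (ii)) realizes this scheme with $f \not\equiv 0$, the multiplier and additive parts of the overshear corresponding precisely to the two functions of Proposition \ref{prop:doublehypercyc}; in the shear case (part (i)) both functions serve as additive shear data, the odd and even subsequences delivering the prescribed shears that exhaust the shear group.
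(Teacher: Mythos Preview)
Your density argument takes an unnecessary detour through the cascade $F^N$, and that detour has a gap. In $F^N = \prod_{k=0}^{N-1} I^k O I^{-k}$ all slots carry the \emph{same} defining data $(f,g)$, merely read through different coordinate permutations; conjugating by $\tau^m$ shifts all slots simultaneously, so along either subsequence of Proposition~\ref{prop:doublehypercyc} you obtain a very specific product of overshears, not ``pure-multiplier and pure-additive (over-)shears in each coordinate direction with arbitrary defining functions.'' You never get independent control of the slots, and it is not clear why the group generated by these particular cascades should be the full (over-)shear group. The paper's route is much simpler and is essentially what you already set up in your first paragraph: after normalizing $\tau$ to shift every coordinate by the same $b\neq 0$, one computes that $\tau^{-m}\circ F_{f,g}\circ\tau^m$ is again a twisted (over-)shear of the same form with defining data $f(\cdot+mb),\, g(\cdot+mb)$ (plus a harmless constant absorbed into $g$). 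Hypercyclicity in the shear case, and Proposition~\ref{prop:doublehypercyc} in the overshear case, then make the set of conjugates dense in $\{F_{\hat f,\hat g}\}$; in particular $I$ itself lies in the closure, and conjugation by powers of $I$ rotates directions. No $F^N$ is needed.

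For freeness you propose a Baire-category argument, whereas the paper invokes a Nevanlinna-theoretic growth/degree argument in the style of Ahern--Rudin. Your scheme is a legitimate alternative, and the ``relations are closed'' step is fine; the real work, as you note, is showing each relation set has empty interior. Your plan---exhibit one $(f,g)$ breaking each word and invoke analytic dependence---needs care in an infinite-dimensional Fr\'echet space: analyticity alone does not force zero sets to be nowhere dense there. A clean fix is to restrict to one-parameter complex lines $t\mapsto (f_0+t\varphi,\,g_0+t\psi)$ through a given $(f_0,g_0)$, observe that $t\mapsto w(\tau,F_{f_0+t\varphi,g_0+t\psi})(p)$ is entire in $t$ for each point $p$, and show it is nonconstant for a suitable choice of $\varphi,\psi$ (this is where the combinatorics of how $I$ interleaves the coordinate directions really enters). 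That last nonconstancy step is not yet argued in your proposal; the paper's approach sidesteps it by comparing Nevanlinna orders of the two sides of a putative relation, which for transcendental $f,g$ forces a contradiction directly.
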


\begin{proof}
After conjugating with a special complex-linear map, we may assume that
\[
\tau(z) = \tau(z_1, \dots, z_n) = (z_1 + b, \dots, z_n + b), \quad b \in \RR_{>0}
\]
We then define the following elements of the overshear group of $\CC^n$:
\begin{align*}
I(z)        := & (z_2, \dots, z_n, -(-1)^n z_1) \\
F_{f,g}(z)  := & (z_2, \dots, z_n, \\ 
& -(-1)^n \exp(f(z_2,\dots, z_n)) \cdot z_1 + g(z_2,\dots, z_n) + (1 - (-1)^n) z_n) 
\end{align*}
where $f, g \in \holalg{\CC^{n-1}}$. Note that for $f \equiv 0$, these two elements actually belong to the shear group. The different signs depending on the dimension ensure that the Jacobian is equal to $1$.
\begin{enumerate}
\item We first treat the shear group, where $f \equiv 0$. By Theorem \ref{thm:pseudoconv} we may choose $g$ to be a hypercyclic element for the composition operator associated to the translation $(z_2, \dots, z_{n}) \mapsto (z_2 + b, \dots, z_{n} + b)$ of $\CC^{n-1}$.
For the conjugates of $F_{0,g}$ by the $m$-th iteration of $\tau$ we obtain:
\begin{align*}
& (\tau^{-m} \circ F_{0,g} \circ \tau^m)(z_1, \dots, z_n) = \\
& \qquad (z_2, \dots z_n, -(-1)^n z_1 + g(z_2 + m b, \dots, z_n + m b) + 2 z_n)
\end{align*}
Thus, by hypercyclicity of the composition operator we obtain that the set
$\{ \tau^{-m} \circ F_{0,g} \circ \tau^m, \; m \in \NN \}$ is dense in $\{ F_{0,h}, \; h \in \holalg{\CC^{n-1}} \}$. In particular it is possible to approximate the map $I$ by compositions of $F_{0,g}$ and $\tau$, hence also $I^n = -(-1)^n \id$ and $I^{-1}(z) = I^{2n-1}(z) = (-(-1)^n z_n, z_1, \dots, z_{n-1})$. Conjugating with $I^1, \dots, I^{n-1}$ will give twisted shears in all directions, and $F_{0, h} \circ I^{-1}$ is an ordinary shear. Therefore all shears and their compositions can be approximated this way.
\item For the overshear group we proceed in exactly the same way. By Proposition \ref{prop:doublehypercyc}, using an ordinary translation $\tau : \CC^n \to \CC^n$ it is possible to approximate all twisted overshears of the form $F_{0,\hat{g}}$ and $F_{\hat{f}, 0}$ and hence also $F_{\hat{f}, \hat{g}} = F_{0,\hat{g}} \circ I^{-1} \circ F_{\hat{f}, 0}$. 
\end{enumerate}
To ensure that the generated group is actually a free group, we need to show that no reduced word formed by $F_{f,g}$ and $\tau$ can equal the identity. This follows from an application of Nevanlinna theory similar to a degree argument as in Ahern and Rudin \cite{AhernRudin}. \qedhere
\end{proof}

\begin{corollary} \hfill
\begin{enumerate}
\item For any translation $\tau \not\equiv 0$ of $\CC^n, n \geq 2,$ there exists an $F \in \autvol{\CC^n}{\omega}$ such that $\left< \tau, F  \right>$ is a free and dense (in compact-open topology) subgroup of $\autvol{\CC^n}{\omega}$, where $\omega = dz_1 \wedge \dots \wedge dz_n$.
\item For any translation $\tau \not\equiv 0$ of $\CC^n, n \geq 2,$ there exists an $F \in \aut{\CC^n}$ such that $\left< \tau, F  \right>$ is a free and dense (in compact-open topology) subgroup of $\aut{\CC^n}$.
\end{enumerate}
\end{corollary}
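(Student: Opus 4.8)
The plan is to deduce this corollary directly from Theorem~\ref{thm:twoauteukl}, by combining it with the classical density of the (over-)shear group inside the full (volume-preserving) automorphism group and using that density is transitive. The only points that require checking are that the two generators $\tau$ and $F$ genuinely lie in the ambient group under consideration, and that the abstract freeness of $\left< \tau, F \right>$ is unaffected when we enlarge the ambient group; neither is serious.

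For part~(1) I would first note that the translation $\tau$ has Jacobian identically $1$, so $\tau^\ast(\omega) = \omega$, and that the map $F$ produced by Theorem~\ref{thm:twoauteukl}(1) is a shear (equivalently, has Jacobian $1$), so $F^\ast(\omega) = \omega$ as well; hence $\left< \tau, F \right> \subseteq \autvol{\CC^n}{\omega}$. Theorem~\ref{thm:twoauteukl}(1) tells us that $G := \left< \tau, F \right>$ is free and dense in the shear group $S$, while by Anders\'en \cite{Andersen} the group $S$ is itself dense in $\autvol{\CC^n}{\omega}$. Taking closures inside $\autvol{\CC^n}{\omega}$, density of $G$ in $S$ gives $S \subseteq \overline{G}$, and since $\overline{G}$ is closed we obtain $\autvol{\CC^n}{\omega} = \overline{S} \subseteq \overline{G}$, so that $G$ is dense in $\autvol{\CC^n}{\omega}$.

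Part~(2) is entirely analogous: the map $F$ of Theorem~\ref{thm:twoauteukl}(2) lies in $\aut{\CC^n}$ trivially, Theorem~\ref{thm:twoauteukl}(2) makes $\left< \tau, F \right>$ free and dense in the overshear group, and the overshear group is dense in $\aut{\CC^n}$ by Anders\'en--Lempert \cite{AndersenLempert}. The same transitivity-of-density step then concludes the argument.

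Freeness is an intrinsic property of the abstract group $\left< \tau, F \right>$ and is therefore inherited unchanged from Theorem~\ref{thm:twoauteukl}, independently of which topological group we view it as sitting inside. I do not expect a real obstacle here, as this is essentially a formal consequence of the main theorem; the one place to be mildly careful is the transitivity step, where one must remember that the shear (resp.\ overshear) group carries the subspace topology inherited from $\autvol{\CC^n}{\omega}$ (resp.\ $\aut{\CC^n}$), so that the closures computed in the subgroup and in the ambient group are compatible.
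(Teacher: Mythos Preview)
Your proposal is correct and follows essentially the same approach as the paper: invoke Theorem~\ref{thm:twoauteukl} to obtain a free subgroup $\langle \tau, F\rangle$ dense in the (over-)shear group, then use Anders\'en \cite{Andersen} resp.\ Anders\'en--Lempert \cite{AndersenLempert} to pass to density in $\autvol{\CC^n}{\omega}$ resp.\ $\aut{\CC^n}$. The paper's proof is simply a terse version of yours, recording only the two density citations and leaving the transitivity-of-density and freeness remarks implicit.
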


\begin{proof}
By a result of Anders\'en \cite{Andersen}, the group of holomorphic shears is dense in $\autvol{\CC^n}{\omega}, \; \omega = dz_1 \wedge \dots \wedge dz_n$, and by a the corresponding result of Anders\'en--Lempert \cite{AndersenLempert}, the group of holomorphic overshears is dense in $\aut{\CC^n}$.
\end{proof}

\section{Density Property}
\label{sec:densprop}

The density property was introduced in Complex Geometry by Varolin \cite{Varolin1}, \cite{Varolin2}. For a survey about the current state of research related to density property and Anders{\'e}n--Lempert theory, we refer to Kaliman and Kutzschebauch \cite{KalimanKutzschebauch}.

\begin{definition}
\label{def:density}
A complex manifold $X$ has the \emph{density property} if in the compact-open topology the Lie algebra $Lie_{hol}(X)$ generated by completely integrable holomorphic vector fields on $X$ is dense in the Lie algebra $VF_{hol}(X)$ of all holomorphic vector fields on $X$.
\end{definition}

\begin{definition}
\label{def:densityvol}
Let a complex manifold $X$ be equipped with a holomorphic volume form $\omega$. We say that $X$ has the \emph{volume density property} with respect to $\omega$ if in the compact-open topology the Lie algebra $Lie^\omega_{hol}(X)$ generated by completely integrable holomorphic vector fields $\nu$ such that $\nu(\omega) = 0$, is dense in the Lie algebra $VF^\omega_{hol}(X)$ of all holomorphic vector fields that annihilate $\omega$.
\end{definition}

The following theorem is the central result of Anders{\'e}n--Lempert theory (originating from works of Anders\'en and Lempert \cite{Andersen}, \cite{AndersenLempert}), has been stated by Varolin \cite{Varolin1, Varolin2} after introducing the density property and is given in the following form in \cite{KalimanKutzschebauch} by Kaliman and Kutzschebauch, but essentially (for $\CC^n$) proven already in \cite{ForstnericRosay} by Forstneri\v{c} and Rosay.
\begin{theorem}[Theorem 2 in \cite{KalimanKutzschebauch}]
\label{thm:andlemp}
Let $X$ be a Stein manifold with the density (resp. volume density) property and let $\Omega$ be an open subset of $X$. In case of volume density property further assume that in de-Rham cohomology $H^{n-1}(\Omega,\CC) = 0$. Suppose that $\Phi : [0, 1] \times \Omega \to X$ is a $\cont^1$-smooth map such that
\begin{enumerate}
\item $\Phi_t : \Omega \to X$ is holomorphic and injective (and resp. volume preserving) for every $t \in [0, 1]$
\item $\Phi_0 : \Omega \to X$ is the natural embedding of $\Omega$ into $X$
\item $\Phi_t(\Omega)$ is a Runge subset of $X$ for every $t \in [0, 1]$
\end{enumerate}
Then for each $\varepsilon > 0$ and every compact subset $K \subset \Omega$ there is a continuous family
$\alpha : [0, 1] \to \aut{X}$ of holomorphic (and resp. volume preserving) automorphisms of $X$ such that
\[
\alpha_0 = \id \; \mbox{and} \; \sup_K d(\alpha_t,\Phi_t) < \varepsilon
\]
for every $t \in [0, 1]$.
\end{theorem}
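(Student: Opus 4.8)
The plan is to realize the isotopy $\Phi_t$ as the non-autonomous flow of a time-dependent holomorphic vector field, and then to approximate that flow, on each short time interval, by compositions of flows of completely integrable fields, which are genuine automorphisms of $X$, using the density property.

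First I would differentiate the isotopy. For $t \in [0,1]$ and $y = \Phi_t(x) \in \Phi_t(\Omega)$, set
\[
v_t(y) := \left. \frac{\partial}{\partial s}\right|_{s=t} \Phi_s\bigl(\Phi_t^{-1}(y)\bigr).
\]
Since each $\Phi_t$ is injective holomorphic (condition 1) with open image, $\Phi_t^{-1}$ is holomorphic on $\Phi_t(\Omega)$, and as $\Phi$ is $\cont^1$ in $t$ this defines a time-dependent holomorphic vector field $v_t$ on the moving domains $\Phi_t(\Omega)$; because $\Phi_0$ is the inclusion (condition 2), its non-autonomous flow starting at time $0$ recovers $\Phi_t$ on $\Omega$. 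The difficulty is that $v_t$ lives only on the open set $\Phi_t(\Omega)$ and is in general not completely integrable, so it does not directly generate automorphisms of $X$.

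Next I would discretize. Fix $\varepsilon > 0$ and the compact $K \subset \Omega$, and choose a fine partition $0 = t_0 < t_1 < \dots < t_N = 1$. On each subinterval $[t_{i-1},t_i]$ the non-autonomous flow of $v_t$ differs, up to an error governed by the modulus of continuity of $t \mapsto v_t$ and the mesh, from the time-$(t_i - t_{i-1})$ autonomous flow of the frozen field $w_i := v_{t_{i-1}}$, defined near $\Phi_{t_{i-1}}(\Omega)$. The core approximation then proceeds in two stages: because $\Phi_{t_{i-1}}(\Omega)$ is Runge in the Stein manifold $X$ (condition 3), Runge approximation for vector fields lets me approximate $w_i$, uniformly on the relevant compact, by a global field $\tilde w_i \in VF_{hol}(X)$; by Definition \ref{def:density} I approximate $\tilde w_i$ by an element of $Lie_{hol}(X)$, that is, by a finite expression built from completely integrable fields using sums and Lie brackets. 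Finally the short-time flow of such an expression is approximated by a composition of flows of the individual complete fields, via the standard product formulas $\phi^{X+Y}_t = \lim_{n}(\phi^X_{t/n}\circ\phi^Y_{t/n})^n$ and $\phi^{[X,Y]}_{t} = \lim_n (\phi^Y_{-s}\circ\phi^X_{-s}\circ\phi^Y_{s}\circ\phi^X_{s})^{n}$ with $s = \sqrt{t/n}$, each factor of which is an honest element of $\aut{X}$. Concatenating these compositions over $i = 1, \dots, N$ and interpolating continuously in $t$ yields $\alpha_t \in \aut{X}$ with $\alpha_0 = \id$.

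The main obstacle is quantitative error control, since errors arise from freezing $v_t$, from the two-stage Runge/density approximation of $w_i$, and from the product-formula approximation of the flow, and none of these may accumulate beyond $\varepsilon$ on $K$ after $N$ compositions. The standard device is to fix slightly enlarged compacts and arrange inductively that at each step the partial composition maps the image of $K$ into the interior of the domain on which the next field is defined and well approximated; the Lipschitz bounds on the flows then permit choosing the successive approximation errors small enough (for instance geometrically decreasing) that the total remains below $\varepsilon$. In the volume-preserving case one runs the identical scheme inside the divergence-free category: the hypothesis $H^{n-1}(\Omega,\CC) = 0$ guarantees that the closed $(n-1)$-form $\iota_{v_t}\omega$ is exact, so $v_t$ is approximable by globally defined $\omega$-annihilating fields, and the volume density property of Definition \ref{def:densityvol} replaces Definition \ref{def:density}, producing volume-preserving automorphisms throughout.
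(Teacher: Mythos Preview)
The paper does not give its own proof of this theorem: it is quoted as Theorem~2 of \cite{KalimanKutzschebauch} and attributed to the line of work \cite{Andersen}, \cite{AndersenLempert}, \cite{ForstnericRosay}, \cite{Varolin1}, \cite{Varolin2}, so there is no in-paper argument to compare against. That said, your outline is exactly the standard Anders\'en--Lempert scheme found in those references: differentiate the isotopy to a time-dependent field on the moving Runge images, freeze in short time steps, globalize each frozen field by Runge approximation of sections, replace by a Lie combination of complete fields via the density property, and realize the short-time flow by Lie--Trotter/commutator product formulas, with the $H^{n-1}(\Omega,\CC)=0$ hypothesis entering precisely so that the closed form $\iota_{v_t}\omega$ is exact and the Runge step can be carried out inside divergence-free fields. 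As a sketch this is correct and matches the literature; the only place where genuine care is needed, and which you flag but do not carry out, is the inductive bookkeeping that keeps the evolving image of $K$ inside the domains where the next approximation is valid and sums the step errors to at most~$\varepsilon$.
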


\begin{examples} \hfill 
\begin{enumerate}
\item \label{dens-ex1} $\CC^n, \; n \geq 2,$ have the density property.
\item \label{dens-ex2} $\CC^* \times \CC^*$ has the volume density property for the holomorphic volume form $\omega = \frac{\mathrm{d} x}{x} \wedge \frac{\mathrm{d} y}{y}$. Whether it has the density property is not clear.
\item Homogeneous Stein manifolds $X = G/K$, where $G$ is a semi-simple Lie group have the density property. (see \cite{dens3}).
\item Linear algebraic groups except those with $\CC$ and $(\CC^*)^n$ as a connected component have the density property, and all linear algebraic groups have the volume density property with respect to the Haar form (see \cite{dens4-1}, \cite{KalimanKutzschebauch1}). Note that examples \ref{dens-ex1} and \ref{dens-ex2} are special cases of linear algebraic groups.
\item A hypersurface $H \subset \CC^{n+2}$ of the form $H = \{ (x, u, v) \in \CC^n \times \CC \times \CC \,:\, f(x) = u \cdot v\}$ where $f : \CC^n \to \CC$ is a polynomial with smooth zero fiber, has both the density property and the volume density property with respect to a unique algebraic volume form (see \cite{dens4-1}, \cite{KalimanKutzschebauch2}).
\end{enumerate}
\end{examples}

We will also need the following lemma which makes the distinction between connected and path-connected component superfluous for groups of holomorphic automorphisms.

\begin{lemma}
\label{lem:autoconnected}
The connected component of the group of holomorphic automorphisms resp. volume-preserving holomorphic automorphisms of a complex manifold resp. complex manifold with a holomorphic volume form is $\cont^1$-path-connected.
\end{lemma}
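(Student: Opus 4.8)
The plan is to reduce everything to a single local statement and then run the standard topological-group argument. Let $G$ denote $\aut{X}$ (resp.\ $\autvol{X}{\omega}$), and let $G^\ast$ be the set of automorphisms that can be joined to $\id$ by a $\cont^1$-path inside $G$. It suffices to treat the identity component $G_0$, since every other component is a translate $g G_0$ and left translation by $g$ carries $\cont^1$-paths to $\cont^1$-paths (and preserves holomorphy and $\omega$). First I would check that $G^\ast$ is a subgroup: if $\alpha_t$ joins $\id$ to $f$ and $\beta_t$ joins $\id$ to $g$, then $t \mapsto \alpha_t \circ \beta_t$ is a $\cont^1$-path from $\id$ to $f \circ g$, and $t \mapsto \alpha_t^{-1}$ is a $\cont^1$-path from $\id$ to $f^{-1}$ (the inverse of a family depending $\cont^1$ on $t$ and biholomorphically in $x$ again depends $\cont^1$ on $t$, by the inverse function theorem). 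In the volume case composition and inversion visibly preserve the condition $\cdot^\ast\omega = \omega$, so the same computation stays inside $\autvol{X}{\omega}$. Since every $\cont^1$-path is continuous, $G^\ast \subseteq G_0$.

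It therefore suffices to prove that $G^\ast$ is open. By homogeneity this reduces to finding a single open neighbourhood $U$ of $\id$ with $U \subseteq G^\ast$: for any $h \in G^\ast$ the set $hU$ is then an open neighbourhood of $h$ contained in $G^\ast \cdot G^\ast = G^\ast$, so every point of $G^\ast$ is interior. An open subgroup is automatically closed (its complement is a union of cosets, each open), so $G^\ast$ is clopen and hence contains the connected set $G_0$. Combined with $G^\ast \subseteq G_0$ this yields $G^\ast = G_0$, which is the assertion.

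The entire content of the lemma is thus the local claim, which I expect to be the main obstacle: every automorphism $f$ sufficiently close to $\id$ in the compact-open topology can be joined to $\id$ by a $\cont^1$-path \emph{of automorphisms}. To construct such a path I would interpolate holomorphically near the diagonal. Let $\Delta \subset X \times X$ be the diagonal, and fix a holomorphic map $\Gamma$, defined on a neighbourhood $W$ of $\Delta$ for $t \in [0,1]$, holomorphic in the $X\times X$-variable and $\cont^1$ in $t$, with $\Gamma(\,\cdot\,, 0) = \mathrm{pr}_1$, $\Gamma(\,\cdot\,, 1) = \mathrm{pr}_2$, and $\Gamma((x,x), t) = x$ (a holomorphic connecting spray along $\Delta$). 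For $f$ close to $\id$ the graph $\{(x, f(x))\}$ lies in $W$, and
\[
\alpha_t(x) := \Gamma\bigl((x, f(x)), t\bigr)
\]
is holomorphic in $x$, is $\cont^1$ in $t$, and satisfies $\alpha_0 = \id$, $\alpha_1 = f$.

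The delicate points, where I expect the real work to lie, are twofold. First, the existence of $\Gamma$: a holomorphic spray or connection along $\Delta$ carries curvature-type obstructions on a general complex manifold, so one may have to shrink $W$ and argue only to the order actually needed. Second, one must verify that each intermediate $\alpha_t$ is a genuine automorphism of $X$ and not merely an injective holomorphic self-map, since bijectivity and properness are global conditions that closeness of $f$ to $\id$ on compacta does not by itself guarantee; I would control this by shrinking $U$ and by viewing $\alpha_t$ as the flow of the time-dependent holomorphic field $v_t = \partial_t \alpha_t \circ \alpha_t^{-1}$, reducing surjectivity to completeness of this flow on $[0,1]$. For the volume-preserving case I would in addition correct the path so that $\alpha_t^\ast \omega = \omega$: either arrange from the outset that $v_t$ annihilates $\omega$, i.e.\ $\div_\omega v_t = 0$, or apply a Moser-type deformation, since $\alpha_t^\ast\omega$ and $\omega$ are cohomologous and close. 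The correction is itself a $\cont^1$-path in $\autvol{X}{\omega}$, so composing with it keeps the whole construction inside $G^\ast$.
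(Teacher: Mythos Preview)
The paper does not prove this; it simply cites Lind, Remark~6.6. Your reduction --- $G^\ast$ is a subgroup of $G_0$, and an open subgroup is closed, hence $G^\ast=G_0$ once $G^\ast$ is open --- is correct and standard, so the content does sit entirely in the local claim.

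The local argument you propose, however, breaks on non-compact $X$, and earlier than the two ``delicate points'' you flag. A compact-open neighbourhood $U$ of $\id$ constrains $f$ only on a single compact set; no matter how small $U$ is, it contains automorphisms with $d(f(x),x)$ unbounded as $x$ leaves every compactum (on $\CC^2$, take the shear $(z,w)\mapsto(z,w+g(z))$ with $g$ tiny on a large disk but enormous outside). Hence the graph $\{(x,f(x))\}$ need not lie in any prescribed neighbourhood $W$ of the diagonal, and your interpolant $\alpha_t(x)=\Gamma((x,f(x)),t)$ is not even globally defined. Shrinking $U$ cannot repair this, and the rescue via $v_t=(\partial_t\alpha_t)\circ\alpha_t^{-1}$ presupposes the global existence of $\alpha_t$, which is precisely what fails. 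The Moser correction in the volume case has the same defect: on a non-compact manifold it needs uniform control that the compact-open topology does not supply.

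So the spray/convex-interpolation idea, natural for compact $X$, does not carry the lemma in the stated generality; whatever Lind records presumably bypasses the ``small neighbourhood of $\id$'' route. For the application in this paper one can in fact sidestep the lemma: on a Stein manifold with the density property the required $\cont^1$-paths to $\id$ are produced directly by the Anders\'en--Lempert theorem as finite compositions of flows of complete holomorphic vector fields.
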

\begin{proof}
See Lind \cite[Remark 6.6]{Lind}. 
\end{proof}

\begin{theorem}
\label{thm:twoautdens}
Let $X$ be a Stein manifold with density property and assume there exists a generalized translation $\tau \in \autid{X}$. Then there exists an $F \in \autid{X}$ such that $\left< \tau, F  \right>$ is a free and dense (in compact-open topology) subgroup of $\autid{X}$.
\end{theorem}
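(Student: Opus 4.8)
The plan is to reduce the density statement to a single hypercyclicity assertion at the level of automorphisms: I would construct $F \in \autid{X}$ whose conjugation orbit $\{\,\tau^m \circ F \circ \tau^{-m} : m \in \NN\,\}$ is \emph{already} dense in $\autid{X}$. Since this orbit is contained in $\langle \tau, F\rangle$, density of the subgroup is then immediate, and the same $F$ is a hypercyclic element for $\widetilde{C}_\tau$, so Theorem \ref{thm:densehyperc} drops out of the construction. The construction is the Andersén--Lempert analogue of the Birkhoff scheme already carried out for functions in Proposition \ref{prop:doublehypercyc}, with the generalized translation $\tau$ playing the role an ordinary translation plays there and Theorem \ref{thm:andlemp} replacing Runge approximation of functions.

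Concretely, I would first fix a sequence $\{G_j\}_{j\in\NN}$ dense in $\autid{X}$ (the space is separable and metrizable). Using that $\tau$ is a generalized translation, I would choose an exhaustion $\{K_j\}$ of $X$ by $\holalg{X}$-convex compacts and iterates $\tau^{m_j}$ so that $K_j\cap\tau^{m_j}(K_j)=\emptyset$, the union $K_j\cup\tau^{m_j}(K_j)$ is $\holalg{X}$-convex, and the regions on which $F$ is to be prescribed are pairwise disjoint and escape to infinity. I would then build $F=\lim_{k\to\infty}\psi_k\circ\cdots\circ\psi_1$ as an infinite composition, where each correction $\psi_k\in\autid{X}$ is furnished by Theorem \ref{thm:andlemp} applied to a suitable isotopy: $\psi_k$ is uniformly close to the identity on the previously used regions, so that the composition converges and the earlier approximations survive, while it arranges $\tau^{m_k}\circ F\circ\tau^{-m_k}\approx G_k$ on $K_k$. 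The disjointness supplied by the generalized translation is exactly what decouples the successive corrections, mirroring the mutually orthogonal tail estimates in the proof of Proposition \ref{prop:doublehypercyc}; and as in all such constructions one must also estimate the inverses $\psi_k^{-1}$ to guarantee that the limit $F$ is a genuine automorphism and not merely an injective holomorphic self-map.

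Freeness of $\langle\tau,F\rangle$ is where the argument departs most from the Euclidean case and is, I expect, the main obstacle. For $\CC^n$ one excludes nontrivial relations by a Nevanlinna/degree computation in the spirit of Ahern--Rudin, but no such global growth invariant is available on an arbitrary Stein manifold. Instead I would fold freeness into the iterative scheme by a genericity argument: enumerate all reduced words $w$ in two letters and, at suitable stages, impose the extra condition $w(\tau,F_k)\neq\id$. Since $\{\,G\in\autid{X} : w(\tau,G)=\id\,\}$ is closed with empty interior (the automorphism group being a topological group on which $w(\tau,\cdot)$ is continuous), each such condition can be secured by an arbitrarily small perturbation of the current $\psi_k$ and, being open, persists under all later corrections. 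The genuinely delicate point — the one I would treat most carefully — is the \emph{compatibility} of these freeness-perturbations with the density scheme: they must be small enough, and localized far enough toward infinity, that neither the already-fixed approximations $\tau^{m_j}\circ F\circ\tau^{-m_j}\approx G_j$ nor the convergence of the infinite composition (together with the inverse estimates) is destroyed. Verifying the injectivity and $\holalg{X}$-convexity hypotheses of Theorem \ref{thm:andlemp} for the isotopy realizing each $\psi_k$ is the remaining technical burden, and it is there that the density property does the essential work.
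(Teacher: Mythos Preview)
Your proposal is correct and follows essentially the same route as the paper: construct $F$ by an inductive Anders\'en--Lempert scheme so that the conjugation orbit $\{\tau^{\pm m}\circ F\circ\tau^{\mp m}\}$ is already dense (the paper phrases this as a sequence $F_j$ with conditions $(a_j)$, $(b_j)$ rather than an infinite composition, but the two formulations are equivalent), control inverses so the limit stays in $\autid{X}$, and fold freeness into the iteration by perturbing at each stage to kill finitely many word relations. The one place your sketch is thinner than it should be is the parenthetical justifying that $\{G:w(\tau,G)=\id\}$ has empty interior---continuity of $w(\tau,\cdot)$ gives only closedness; the paper handles this more concretely by requiring merely that no short word equal the identity \emph{on a fixed compact} $L_{k(j)}$, which is an open and manifestly nonempty condition one can secure by moving a single point.
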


\begin{proof}
Because $X$ is Stein, there exists an exhaustion $\{L_j\}_{j \in \NN}$ of $X$ with $\holalg{X}$-convex compacts.

We choose a countable dense subset (in compact-open topology) $\{ g_j \}_{j \in \NN} \subset \autid{X}$. Such a set exists, since we can think of $\autid{X} \subset \holalg{X, X} \subset \holalg{\CC^N, \CC^N}$ as metric spaces, where we take $X$ to be properly embedded into $\CC^N$, and $\holalg{\CC^N, \CC^N}$ is known to be a separable Fr\'echet space.

We now construct $F$ inductively as a sequence of maps $F_j, \; j \in \NN, \; $ together with sequences $\{\varepsilon_j\}_{j \in \NN} \subset \RR_{> 0}$ with $\sum_{j=1}^\infty \varepsilon_j < \infty$, $\{k(j)\}_{j\in\mathbb N}, k(j)\in\mathbb N, k(j)<k(j+1)$, and $m_j\in\mathbb N$, such that the following hold at step $j$:

\begin{itemize}
\item[($a_j$)] $\sup_{L_{k(i)}} d(\tau^{-m_i} \circ F_j \circ \tau^{m_i}, g_i) < \varepsilon_i$, $\forall \, i \leq j$, 
\item[($b_j$)] $\sup_{L_{k(j)}}\left( d(F_j, F_{j-1}) + d(F_j^{-1},F^{-1}_{j-1}) \right) < \epsilon_j$. 
\end{itemize}

\medskip

We set $F_1=Id, \epsilon_1=k(1)=m(1)=1$.  Assuming that $g_1=\id$ we have ($a_1$) and ($a_2$) is void.   

\medskip

Assume now that we have constructed $F_{j-1}$ along with the integers for some $j\geq 2$.   

\medskip

Choose $k(j)$ so large such that 
\begin{equation*}\label{eq:condition}
\tau^{m_i}(L_{k(i)})\subset L^\circ_{k(j)} \mbox{ for all } i<j.  
\end{equation*}

\medskip

By Lemma \ref{lem:autoconnected} there exist  $\cont^1$-paths $[0,1] \ni t \to \varphi^j_t, \psi^j_t$ in $\autid{X}$ connecting
$F_{j-1}$ and $g_j$  respectively to the identity.    Let $K_j$ be a large enough holomorphically convex compact set 
containing both the complete $\varphi^j_t$-orbit of $L_{k(j)}\cup F_{j-1}^{-1}(L_{k_j})$ and the complete $\psi^j_t$-orbit of $L_{k(j)}$ 
in its interior.   Choose $m_j$ large enough such that $K_j$ and $\tau^{m_j}(K_j)$ are disjoint and such that 
their union is $\holalg{X}$-convex.   Let $C_j:=\widehat{[L_{k(j)}\cup F_{j-1}^{-1}(L_{k(j)})]}$.
We define an isotopy $\Phi^j_t$ of maps near $C_j\cup\tau^{m_j}(L_{k(j)})$ by setting it equal to $\varphi^j_t$ near $C_j$
and $\tau^{m_j}\circ\psi^j_t\circ\tau^{-m_j}$ near $\tau^{m_j}(L_{k(j)})$.

\medskip

For any $\tilde\epsilon_j$ we may now apply the Anders\'en--Lempert Theorem and obtain an $F_{j} \in \autid{X}$ such that
\[
\sup_{L_{k(j)}} d(F_j, F_{j-1})+d(F_j^{-1},F_{j-1}^{-1}) < \tilde\varepsilon_j\]

and 
\[
\sup_{\tau^{m_j}(L_{k(j)})} d(F_j, \tau^{m_j}\circ g_j\circ\tau^{-m_j}) < \varepsilon_j
\]

\medskip

We see that we may choose $\tilde\epsilon_j<2^{-j}$ small enough such that $(a_j)$ and $(b_j)$ are satisfied.
\medskip

To ensure that the group generated by $\tau$ and $F$ is free, we need to avoid that there is a non-trivial reduced word (formed by $\tau$ and $F$) equal to the identity. For induction step $j$ there are finitely many words of length $\leq j$. By changing the choice of $f_j$ within the required bounds we can make sure that on the compact $L_{k(j)}$ no word of length $\leq j$ equals the identity. Set $\delta_j :=\min \{ \sup_{L_{k(j)}} d(w, \id) \; : \; w \neq \id \mbox{ reduced word of length } \leq j, \mbox{ consisting of } \tau \mbox{ and } F_j \}$ and choose all subsequent $\varepsilon_k, k \geq j + 1,$ small enough such that $\sum_{k=j+1}^\infty C_k \varepsilon_k < \delta_j$ where the constants $C_k$ take into account the number of possible words.

\medskip

We have now obtained a sequence $F_{j}$ of holomorphic automorphisms of $X$ and it follows from the conditions $(b_j)$
the $F_j\rightarrow F\in\autid{X}$.
It is immediate from the condition $(a_j)$ that $\{ \tau^{-m_j} \circ F_j \circ \tau^{m_j} \}_{j \in \NN}$ is dense in $\autid{X}$.
It follows that $ \tau^{-m_j} \circ F \circ \tau^{m_j} =  (\tau^{-m_j} \circ F_j \circ \tau^{m_j}) \circ (\tau^{-m_j} \circ (F_j^{-1} \circ F) \circ \tau^{m_j}), j \in \NN,$ s dense sequence in $\autid{X}$, since $\tau^{-m_j} \circ (F_j^{-1} \circ F) \circ \tau^{m_j} \to \id$ provided $\varepsilon_j$ is decreasing fast enough.
\end{proof}

\begin{remark}
In case $(X, \omega)$ would be a Stein manifold of dimension $n \in \NN$ with volume density property and an exhaustion of compacts $\{L_j\}_{J \in \NN}$ with de Rham cohomology $H^{n-1}(L_j, \CC) = 0$, one has an similar statement as in Theorem \ref{thm:twoautdens}. Actually, a careful investigation of the proof of the Anders\'en--Lempert Theorem shows that the condition $H^{n-1}(L_j, \CC) = 0$ can be dropped for components of $L_j$ where one wants to approximate the identity and for components where one wants to approximate an already globally defined automorphism, provided that $H^{n-1}(X, \CC) = 0$ holds.
\end{remark}

\begin{examples} In the following examples of Stein manifolds with volume density property, the method proof of the preceding theorem fails for various reasons:
\begin{itemize}
\item The manifold $\CC^*$ with volume form $\omega = \frac{d z}{z}$ has the volume density property; $\aut{\CC^*} = \autvol{\CC^*}{\omega} = \{ z \mapsto a \cdot z^{\pm 1} \; : \; a \in \CC^*\}$. Therefore it is obvious that no generalized translation exists on $\CC^*$. In addition there is due to dimensional reasons ($n-1=0$) the obstruction $\dim H^{0}(\CC^*, \CC) = 1$.
\item The manifold $\CC$ with volume form $\omega = d z$ has the volume density property as well, and actually all volume-preserving automorphisms consist of translations. However again due to dimensional reasons there is the obstruction $\dim H^{0}(\CC^*, \CC) = 1$.
\item The manifolds $X := \CC^* \times \CC^*$ and  $Y := \CC \times \CC^*$  with volume forms $\omega_X = \frac{d z}{z} \wedge \frac{d w}{w}$ resp. $\omega_Y = d z \wedge \frac{d w}{w}$  are known to have the the volume density property. Generalized translations, e.g. $\tau_X(z,w) = (2z, \frac{1}{2}w), \; \tau_Y(z,w) = (z + 1,w)$, exist, but in these cases ($n-1=1$) there is a topological obstruction $\dim H^{1}(X, \CC) = 2$ resp. $\dim H^{1}(Y, \CC) = 1$.
\end{itemize}
\end{examples}

The proof of Theorem \ref{thm:twoautdens} reveals as well that:
\begin{theorem}
\label{thm:densehyperc}
Let $X$ be a Stein manifold with density property and assume there exists a generalized translation $\tau \in \autid{X}$. Then the associated conjugation operator $\widetilde{C}_\tau$, defined by $\widetilde{C}_\tau(f) = \tau \circ f \circ \tau^{-1}$, is hypercyclic on $\autid{X}$.
\end{theorem}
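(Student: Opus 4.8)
The plan is to observe that the automorphism $F \in \autid{X}$ constructed in the proof of Theorem~\ref{thm:twoautdens} is, up to the direction of conjugation, already a hypercyclic element for $\widetilde{C}_\tau$, so the present statement comes essentially for free. In particular the freeness of $\left< \tau, F \right>$ plays no role here, and the word-avoidance bookkeeping (the choice of the $\delta_j$) may simply be discarded; only the density half of that construction is needed.

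First I would unwind the definitions. Since $\autid{X}$ is a topological group and $\tau \in \autid{X}$, the map $f \mapsto \tau \circ f \circ \tau^{-1}$ is an inner automorphism and hence a homeomorphism of $\autid{X}$ onto itself, so $\widetilde{C}_\tau$ is a genuine continuous self-map and its iterates are $\widetilde{C}_\tau^m(F) = \tau^m \circ F \circ \tau^{-m}$. Thus producing a hypercyclic element amounts to exhibiting a single $F \in \autid{X}$ for which the orbit $\{\tau^m \circ F \circ \tau^{-m} : m \in \NN\}$ is dense in the compact-open topology.

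Next I would transport the construction of Theorem~\ref{thm:twoautdens}, whose conditions $(a_j)$ yield an $F$ along which $\tau^{-m_j} \circ F \circ \tau^{m_j}$ approximates a prescribed countable dense set $\{g_j\}_{j \in \NN} \subset \autid{X}$. To align the sign with $\widetilde{C}_\tau$, I would first record that $\tau^{-1}$ is again a generalized translation: the disjointness $\tau^{-m}(K) \cap K = \emptyset$ is equivalent to $K \cap \tau^m(K) = \emptyset$, and applying the automorphism $\tau^m$ carries $\tau^{-m}(K) \cup K$ to $K \cup \tau^m(K)$ while preserving $\holalg{X}$-convexity, so both defining conditions for $\tau^{-1}$ reduce to those already verified for $\tau$. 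Running the inductive construction with $\tau^{-1}$ in place of $\tau$ — equivalently, placing the Anders\'en--Lempert isotopy near $\tau^{-m_j}(L_{k(j)})$ and approximating $\tau^{-m_j} \circ g_j \circ \tau^{m_j}$ there — produces an $F \in \autid{X}$ for which $\tau^{m_j} \circ F \circ \tau^{-m_j} \to g_j$ on $L_{k(j)}$, via the same limiting argument that closes the proof of Theorem~\ref{thm:twoautdens} (transferring density from the $F_j$ to their limit $F$ because the relevant conjugate of $F_j^{-1} \circ F$ tends to $\id$).

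Finally I would conclude. Given any $g \in \autid{X}$, any compact $K \subset X$, and any $\varepsilon > 0$, density of $\{g_j\}$ provides an index $j$ with $L_{k(j)} \supset K$ and $g_j$ within $\varepsilon/2$ of $g$ on $K$, whereupon the conjugation estimate places $\tau^{m_j} \circ F \circ \tau^{-m_j}$ within $\varepsilon$ of $g$ on $K$. Hence the subsequence $\{\tau^{m_j} \circ F \circ \tau^{-m_j} : j \in \NN\}$ is dense, and since this is a subset of the full orbit $\{\widetilde{C}_\tau^m(F) : m \in \NN\}$, the orbit itself is dense and $F$ is a hypercyclic element. I do not expect a genuine obstacle: the entire analytic content — the iterated Anders\'en--Lempert approximation, simultaneously close to the identity near $L_{k(j)}$ and close to the conjugated target near the shifted compact — has already been carried out, and the only points needing care are the elementary symmetry $\tau \leftrightarrow \tau^{-1}$ for generalized translations and the trivial remark that a dense subsequence of an orbit renders the whole orbit dense.
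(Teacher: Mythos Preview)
Your proposal is correct and is essentially what the paper does: it simply remarks that the construction of Theorem~\ref{thm:twoautdens} already yields the desired hypercyclic element, with the freeness bookkeeping discarded. Your explicit verification that $\tau^{-1}$ is again a generalized translation, needed to align the sign of the conjugation with $\widetilde{C}_\tau$, is a clarification the paper leaves implicit.
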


\section{Existence of Generalized Translations}

In the last section we give a couple of examples of manifolds with Density Property with a generalized translation.

\begin{example}
Let $Y$ be a Stein manifold, then there exists a generalized translation on $X = \CC \times Y$, given by $(z, y) \mapsto (z + a, y)$ for $a \in \CC^\ast$. In case $Y$ is a complex Lie group, $Y$ has the Density Property (Varolin \cite{Varolin1}).
\end{example}

\begin{example}
Let $Y$ be a Stein manifold, then there exists a generalized translation on $X = \CC^\ast \times \CC^\ast \times Y$, given by $(z, w, y) \mapsto (a z, w / a, y)$. In case $Y$ is a complex Lie group with Volume Density Property, $Y$ has the Volume Density Property (Varolin \cite{Varolin1}).
\end{example}

\begin{example}
Let $p(z) \in \CC[z]$ be a polynomial with simple roots. Then the so-called \emph{Danielewsi surface}
\[
D_p := \{ (x,y,z) \in \CC^3 \; : \; x \cdot y = p(z) \}
\]
has the Density Property (Kaliman and Kutzschebauch \cite{KalimanKutzschebauch2}). The following map is a generalized translation $\tau_a, \; a \in \CC^\ast,$ on $D_p$:
\begin{align*}
(x, y, z) &\mapsto \left(x, \frac{p(z + a \cdot x)}{x}, z + a \cdot x\right) \\
          &= \left(x, y + \frac{p(z + a \cdot x) - p(z)}{x}, z + a \cdot x\right) \\
          &= \left(x, y + \frac{p(z + a \cdot x) - p(z)}{x}, z + a \cdot x\right) \\
          &= \left(x, y + p^{\prime}(z) \cdot a + \sum_{k=2}^{\infty} p^{(k)}(z) \cdot a^k \cdot x^{k-1}, z + a \cdot x\right) \\
\end{align*}
Note that any compact of $D_p$ can be moved as far away as possible by iterations of $\tau^m_a = \tau_{a m}$. The only crucial case one has to check is for $x = 0$: this implies $p(z) = 0$; but since $p$ has only simple and finitely many zeros, $p^\prime(z)$ is bounded away from $0$, and so is the shift in $y$-direction.
\end{example}

\bibliographystyle{amsplain}

\end{document}